\newcommand{\F}{\mathbb{F}_q}
\newcommand{\EF}{\mathbb{F}_{q^n}}
\newcommand{\EFs}{\mathbb{F}_{q^{2n}}}
\newcommand{\al}{\alpha}
\newcommand{\be}{\beta}
\newcommand{\change}{\textcolor[rgb]{0.3,0.2,0.9}}
\newtheorem{construction}{Construction}
\begin{document} 
	
\title{A recurrent construction of irreducible polynomials of fixed degree over finite fields}

\author{Gohar M. Kyureghyan \and \mbox{Melsik K. Kyureghyan}}

\institute{  Gohar M. Kyureghyan \at
	University of Rostock,Ulmenstraße 69, Haus 3, 18057 Rostock, Germany \\
	 \email{gohar.kyureghyan@uni-rostock.de}         
	\and
	Melsik K. Kyureghyan \at
	Institute for Informatics and Automation Problems, P.\,Sevak Str. 1, 0014 Yerevan, Armenia \\
	\email{melsik@ipia.sci.am}
}

\date{Received: date / Accepted: date}

\maketitle
	
\begin{abstract}
	In this paper we consider in detail the composition of an irreducible polynomial with $X^2$ and suggest a recurrent construction of irreducible polynomials of fixed degree over finite fields of odd characteristics. More precisely, given an irreducible polynomial of degree $n$
	and order $2^rt$ with $t$ odd, the construction produces \change{$ord_t(2)/d$\footnote{\change{In the previous version, the first author calculated this 
	 number  incorrectly  as $ord_t(2)$, see Section 2 for corrections put in blue.}}} irreducible polynomials of degree $n$ and order $t$, for a certain divisor $d$ of $n$.
	The construction can be used for example to search irreducible polynomials with specific requirements on its coefficients.
	
	\keywords{finite fields, composition method, irreducible polynomials, order of polynomial, minimal polynomial, square root}
\end{abstract}

\section{Introduction} \label{sec:main}

The so-called composition method is a powerful tool to study and construct
  polynomials 
over finite fields. It is  extensively used for construction of irreducible polynomials, computing a square root 
 and factorization of polynomials, see for example \cite{cohen,mkyureg1,mkyureg2,kyureg-kyureg,martinez-reis-silva,tux-wang}. 
 General recurrent constructions of irreducible polynomials based on composition of irreducible polynomials 
with quadratic rational functions
are suggested in \cite{mkyureg1,mkyureg2}. For these constructions it is not yet understood  which of them
 are particularly well suited for algorithmic applications.
In this paper we consider in detail the composition of an irreducible polynomial with $X^2$ and suggest a recurrent construction of irreducible polynomials of fixed degree over finite fields of odd characteristics. 
 \\

Let $q$ be a power of an odd prime number.
In this paper we consider the composition of an irreducible polynomial $A(X)\ne X$ of degree $n\geq 1$
over  the finite field $\F$ with the polynomial $X^2$. Set $B(X) := A(X^2)$.
If an element $\beta$ from an extension field of $\F$ is a zero of $B(X)$,
then $\be^2$ is a zero of $A(X)$. Consequently  $\beta ^2$ is a proper element of $\EF$, that is
$\be^2$ belongs to $\EF$ but not to any  subfield $\mathbb{F}_{q^s} \ne \EF$ of it.  
Next we need to distinguish whether
 $\be^2$ is a non-square or square in $\EF$. In the first case  $\beta \not\in \EF$ while $\beta \in \EFs$, or
equivalently $B(X)$ is irreducible over $\F$. In the case when $\beta^2$ is a square in $\EF$ the element $\beta $ belongs to $\EF$, and clearly it is  proper
in it since $\beta^2$ is so. Hence the minimal polynomial $C(X)$ of $\beta$ over $\F$ has degree $n$ and it
is a factor of $B(X)$. Clearly, along with
$\beta$ also $-\beta$ is a zero of $B(X)$. The polynomial $(-1)^nC(-X)$ is
the minimal polynomial of $-\beta$ over $\F$. In particular, if $C(X) \ne (-1)^nC(-X)$, then the 
polynomial $B(X)$ is the product of monic irreducible polynomials $C(X)$ and $(-1)^nC(-X)$. 
The next well-known lemma describes in more detail the factorization of $B(X)$. It shows in particular, that 
under our assumptions $C(X) \ne (-1)^nC(-X)$ always holds.

\begin{lemma}\label{lem:start}
Let $q$ be odd, $n\geq 1$ and $\al \ne 0$ a proper element of $\EF$.  
If $A(X)  \in \F[X]$ is the minimal polynomial of $\al$, then the polynomial $B(X) := A(X^2)$ has the following properties:

\begin{itemize}
\item[(a)] $B(X)$ is irreducible over $\F$ if and only if $\al$ is a non-square in $\EF$.
\item[(b)] $B(X)$ is irreducible over $\F$ if and only if $(-1)^nA(0)$ is a non-square in $\F$.
\item[(c)] $B(X)$ is the product of two different irreducible polynomials  $C(X)$ and \\
$(-1)^nC(-X)$ over $\F$ if and only if 
$\al$ is a square in $\EF$.
\end{itemize} 
\end{lemma}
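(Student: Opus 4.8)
The plan is to pass to a square root of $\al$ in the algebraic closure $\overline{\F}$ and to reduce all three claims to bookkeeping about the degree of that root over $\F$ and about squares in $\EF$. Fix $\be\in\overline{\F}$ with $\be^2=\al$; since $\al$ is a root of $A$, the element $\be$ is a root of $B(X)=A(X^2)$. As $\al$ is proper in $\EF$ we have $[\F(\al):\F]=n$, and $\be^2=\al\in\EF$ gives $[\F(\be):\F(\al)]\le 2$, so $\F(\be)$ is either $\EF$ or $\EFs$. Because $|\EF^{*}|=q^n-1$ is even, $\al$ has a square root in $\EF$ precisely when $\al$ is a square in $\EF$; hence $\be\in\EF$ iff $\al$ is a square in $\EF$, and otherwise $\be\in\EFs\setminus\EF$. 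Finally, $B$ is irreducible over $\F$ iff $B$ is the minimal polynomial of $\be$ iff $\deg\be=2n$: indeed $B$ is monic of degree $2n$ with $\be$ as a root, and if $B$ were reducible the minimal polynomial of $\be$ would be a proper factor, forcing $\deg\be=n$.

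Part (a) is now immediate: $B$ is irreducible iff $\deg\be=2n$ iff $\F(\be)=\EFs$ iff $\al$ is a non-square in $\EF$. For part (b), I would compute the constant term directly, $A(0)=\prod_{i=0}^{n-1}(-\al^{q^i})=(-1)^n\al^{1+q+\cdots+q^{n-1}}=(-1)^n N_{\EF/\F}(\al)$, so that $(-1)^nA(0)=N_{\EF/\F}(\al)$. Then I would apply Euler's criterion on both fields, using the identity $N_{\EF/\F}(\al)^{(q-1)/2}=\al^{\frac{q^n-1}{q-1}\cdot\frac{q-1}{2}}=\al^{(q^n-1)/2}$: the left-hand side is $1$ exactly when $N_{\EF/\F}(\al)$ is a square in $\F$, the right-hand side is $1$ exactly when $\al$ is a square in $\EF$, so $(-1)^nA(0)$ is a non-square in $\F$ iff $\al$ is a non-square in $\EF$, which by (a) is equivalent to the irreducibility of $B$.

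For part (c), assume $\al$ is a square in $\EF$, so $\be\in\EF$. Since $\be^2=\al$ is proper in $\EF$, any subfield of $\EF$ containing $\be$ contains $\al$ and hence equals $\EF$; thus $\be$ is proper in $\EF$, its minimal polynomial $C(X)$ is monic of degree $n$, and it divides $B(X)$. Applying this to $-\be$ shows that its minimal polynomial is the monic degree-$n$ polynomial $(-1)^nC(-X)$, which likewise divides $B(X)$. The one step that requires care is that these two factors are distinct: if $C(X)=(-1)^nC(-X)$, then $-\be$ would be an $\F$-conjugate of $\be$, say $-\be=\be^{q^j}$ with $0\le j\le n-1$; squaring gives $\al=\al^{q^j}$, and the properness of $\al$ forces $j=0$, whence $\be=-\be$ and $\be=0$, contradicting $\al\ne 0$. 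Hence $C(X)$ and $(-1)^nC(-X)$ are two distinct monic irreducible divisors of degree $n$ of the monic polynomial $B(X)$ of degree $2n$, so $B(X)=C(X)\cdot(-1)^nC(-X)$. Conversely, such a factorization makes $B$ reducible, so by (a) $\al$ is a square in $\EF$. The only genuinely non-formal ingredient is the cross-level squareness criterion used in (b) (and implicitly in (a) through the square/non-square alternative); the distinctness of the two factors in (c) is the only other place where one must be slightly careful.
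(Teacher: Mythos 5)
Your proof is correct and follows the same overall skeleton as the paper's: pass to a square root $\be$ of $\al$, observe that $\F(\be)$ is either $\EF$ or $\EFs$ according to whether $\al$ is a square in $\EF$, and in the reducible case exhibit $C(X)$ and $(-1)^nC(-X)$ as the two distinct monic irreducible factors, ruling out $C(X)=(-1)^nC(-X)$ by showing that $-\be=\be^{q^j}$ would force $\al$ into a proper subfield. The one genuine difference is in part (b): the paper proves that $\al$ is a square in $\EF$ exactly when $N(\al)$ is a square in $\F$ by a counting argument (the norm maps the $(q^n-1)/2$ nonzero squares of $\EF$ onto the squares of $\F$, and the preimage of $S_q$ has exactly that size), whereas you use the power identity $N(\al)^{(q-1)/2}=\al^{(q^n-1)/2}$ together with Euler's criterion in both fields. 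Your version is a little more direct and computation-free of cardinality bookkeeping; the paper's version makes the surjectivity of the norm on squares explicit, which is occasionally useful in its own right. Both are complete; your explicit handling of the case $j=0$ (via $\be\neq 0$ and odd characteristic) is a small point the paper leaves implicit by restricting to $1\leq i\leq n-1$.
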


\begin{proof}
The statement in (a) follows from the discussions before this lemma. 
To prove (b), 
recall that $(-1)^nA(0)$ is  the norm $N(\al) = \al^{(q^n-1)/(q-1)}$ of $\al$ over $\F$. 
Note that $\al$ is a square in $\EF$ exactly when its norm is a square in $\F$. 
Indeed, let $S_{q^n}$ and $S_q$ be the sets of non-zero squares in $\EF$ resp. in $\F$.
Then the image $N(S_{q^n})$ is a subset of $S_q$. Since the size of the preimage $N^{-1}(S_q)$ is
$(q^n-1)/2 = |S_{q^n}|$, the equality $N(S_{q^n}) = S_q$ holds. Hence using (a)  $B(X)$ is reducible over $\F$ if and only if $\al$ is a square in $\EF$.
By discussions before this lemma, to complete the proof of (c) it remains to show that
$C(X) \ne (-1)^nC(-X)$. Recall that $A(X)$ factorizes over $\EF$ as follows
\begin{equation}\label{eq:min-pol}
A(X) = (X-\al)(X-\al^q) \ldots (X-\al^{q^{n-1}}),
\end{equation}
and consequently
$$
B(X) =A(X^2) = (X^2-\al)(X^2-\al^q) \ldots (X^2-\al^{q^{n-1}}).
$$
Let $\beta \in \EF$ such that $\al=\beta^2$ and then
$$
X^2-\al = (X-\be)(X+\be).
$$
Observe that since $\al$ is a proper element of $\EF$ so is $\be$ too. This
implies that
$$
C(X) := (X-\be)(X-\be^q) \ldots (X-\be^{q^{n-1}})
$$
is the minimal polynomial of $\beta$ over $\F$ and 
$$
(X+\be)(X+\be^q) \ldots (X+\be^{q^{n-1}}) = (-1)^nC(-X)
$$
is the minimal polynomial of $-\be$ over $\F$. Hence
$C(X) \ne (-1)^nC(-X)$ is equivalent to the property that the minimal polynomials
of $\beta$ and $-\beta$ are different. These minimal polynomials 
are equal if and only if $-\beta$ is a conjugate of $\beta$ over $\F$, that is
$\beta^{q^i} = -\beta$ for some $1\leq i \leq n-1$. 
In the latter case $\alpha^{q^i} = \alpha$ and hence $\al \in \mathbb{F}_{q^{\gcd{(n,i)}}}$, a contradiction to the assumption $\al$ is proper in $\EF$.
\qed
\end{proof}

Two immediate consequences of Lemma \ref{lem:start} are the characterization
of the minimal polynomials of proper elements $\be$ in $\mathbb{F}_{q^{2n}}$ with $\be^2 \in \EF$ and 
 a construction
of irreducible polynomials of degree $2^kn$ by blowing up those of degree $n$:

\begin{corollary}\label{cor:minpol}
Let $q$ be odd and $n \geq 1$.
A proper element $\be$  in $\mathbb{F}_{q^{2n}}$ satisfies $\be^2 \in \EF$ if and only
if the minimal polynomial $B(X)$ of $\be$ over $\F$ fulfills $B(X) = A(X^2)$ for some 
$A(X) \in \F[X]$.  In such a case, 
 $A(X)$ is the minimal polynomial of $\be^2$ over $\F$.
\end{corollary}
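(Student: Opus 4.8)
The statement combines an equivalence with an identification of $A(X)$, and the plan is to read both off from Lemma~\ref{lem:start}(a), using the elementary fact that a monic irreducible polynomial vanishing at an element is that element's minimal polynomial. Throughout, recall that since $\be$ is proper in $\EFs$, its minimal polynomial $B(X)$ over $\F$ has degree exactly $2n$.

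For the forward implication, suppose $\be$ is a proper element of $\EFs$ with $\gamma := \be^2 \in \EF$. First I would verify that $\gamma$ is a nonzero proper element of $\EF$: it is nonzero, since $\gamma=0$ forces $\be=0\in\F$; and if $\gamma$ belonged to a subfield $\mathbb{F}_{q^s}$ with $s\mid n$, $s<n$, then $\be$, being a root of $X^2-\gamma\in\mathbb{F}_{q^s}[X]$, would lie in $\mathbb{F}_{q^{2s}}$, a proper subfield of $\EFs$, contradicting the properness of $\be$. Next I would verify that $\gamma$ is a non-square in $\EF$: any square root of $\gamma$ in $\overline{\F}$ equals $\pm\be$, and neither $\be$ nor $-\be$ lies in $\EF$ (here $-1\in\F$, as $q$ is odd). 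Now let $A(X)\in\F[X]$ be the minimal polynomial of $\gamma$; it has degree $n$ because $\gamma$ is proper in $\EF$. By Lemma~\ref{lem:start}(a), $A(X^2)$ is irreducible over $\F$; being monic of degree $2n$ with $A(\be^2)=0$, it must coincide with the minimal polynomial $B(X)$ of $\be$. Hence $B(X)=A(X^2)$, and along the way $A(X)$ has been identified as the minimal polynomial of $\be^2$.

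For the converse, suppose $B(X)=A(X^2)$ for some $A(X)\in\F[X]$. Since $B(X)$ is monic irreducible of degree $2n$, the polynomial $A(X)$ is monic of degree $n$, and it is itself irreducible: a nontrivial factorization $A=A_1A_2$ would yield the nontrivial factorization $A(X^2)=A_1(X^2)A_2(X^2)$ of $B(X)$. From $A(\be^2)=B(\be)=0$ we conclude that the monic irreducible polynomial $A(X)$ is the minimal polynomial of $\be^2$, so in particular $\be^2\in\EF$ (indeed $\be^2$ is a proper element of $\EF$). I do not expect a genuine obstacle here; the only points that need a little care are, in the forward direction, confirming that $\be^2$ is a \emph{proper} non-square of $\EF$ so that Lemma~\ref{lem:start}(a) applies with $\deg A=n$, and, in the converse direction, the observation that irreducibility of $A(X^2)$ forces irreducibility of $A(X)$.
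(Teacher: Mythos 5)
Your proof is correct and follows essentially the same route as the paper: the converse direction via the observation that irreducibility of $A(X^2)$ forces irreducibility of $A(X)$, and the forward direction by checking that $\be^2$ is a proper non-square of $\EF$ and invoking Lemma~\ref{lem:start}(a). You merely spell out the properness and non-squareness verifications that the paper leaves implicit.
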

\begin{proof}
If $B(X)$ is irreducible, then the polynomial $A(X)$ is irreducible as well.
Hence a zero $\al$ of $A(X)$ is a proper element of $\EF$ satisfying $\al = \be^2$.
Suppose now  $\be \in \mathbb{F}_{q^{2n}}$ is proper and $\be^2 \in \EF$.
Then $\be^2$ is a non-square in $\EF$, and hence the statement follows from
Lemma \ref{lem:start}\,(a). \qed
\end{proof}

\begin{corollary}\label{cor:blowup}
Let $q$ be odd and $F(X) \in \F[X]$ be monic and irreducible of degree 
$n\geq 1$ with $(-1)^nF(0)$ a non-square
in $\F$. Then $F(X^2)$ is irreducible over $\F$. The
polynomial $F(X^{2^k})$ with $k\geq 2$ is irreducible over $\F$ if and only if either
 $q \equiv 3 \pmod 4$ and $n$ is even, or $q \equiv 1 \pmod 4$.
\end{corollary}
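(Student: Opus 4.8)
The plan is to deduce everything from Lemma~\ref{lem:start}(b) by iterating the substitution $X\mapsto X^2$. For the first assertion I would note that the hypothesis forces $F(0)\neq 0$, so $F\neq X$ and a root $\al$ of $F$ is a nonzero proper element of $\EF$ whose minimal polynomial is $F$; then ``$(-1)^nF(0)$ is a non-square in $\F$'' is precisely condition (b) of Lemma~\ref{lem:start} with $A=F$, so $B(X)=F(X^2)$ is irreducible over $\F$.

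For $k\ge 2$ I would set $F_j(X):=F(X^{2^j})$, so that $F_{j+1}=F_j(X^2)$, $\deg F_j=2^jn$, and ---the point that makes the iteration work--- $F_j(0)=F(0)$ for every $j\ge 0$. The key claim, proved by induction on $k\ge 2$, is that $F(X^{2^k})$ is irreducible over $\F$ if and only if $F(0)$ is a non-square in $\F$. For the base case $k=2$ one applies Lemma~\ref{lem:start}(b) to the irreducible polynomial $F_1=F(X^2)$: it has even degree $2n$ and constant term $F(0)$, so since $(-1)^{2n}=1$ the lemma gives $F_2=F_1(X^2)$ irreducible iff $F(0)$ is a non-square. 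For the step from $k$ to $k+1$ (with $k\ge 2$): if $F(0)$ is a non-square, the induction hypothesis makes $F_k$ irreducible of even degree with constant term $F(0)$, and Lemma~\ref{lem:start}(b) again yields $F_{k+1}=F_k(X^2)$ irreducible; if $F(0)$ is a square, then $F_2$ is already reducible, and a nontrivial factorization $F_j=PQ$ gives the nontrivial factorization $F_{j+1}=P(X^2)Q(X^2)$, so reducibility propagates upward and $F_k$ is reducible for every $k\ge 2$.

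It then remains to translate ``$F(0)$ is a non-square in $\F$'' into the stated conditions. Given that $(-1)^nF(0)$ is a non-square, $F(0)$ is a non-square exactly when $(-1)^n$ is a square in $\F$. When $q\equiv 1\pmod 4$ the element $-1$ is a square, hence $(-1)^n$ is a square for every $n$; when $q\equiv 3\pmod 4$ the element $-1$ is a non-square, so $(-1)^n$ is a square iff $n$ is even. Therefore $F(X^{2^k})$ is irreducible over $\F$ for $k\ge 2$ if and only if $q\equiv 1\pmod 4$, or $q\equiv 3\pmod 4$ and $n$ is even, which is the claim.

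I do not expect a serious difficulty: the whole argument rests on Lemma~\ref{lem:start}. The only points needing care are recognizing that the constant term and the parity of the degree both stabilize after the very first composition ---so from $F_1$ onward the ``$(-1)^n$'' sign in Lemma~\ref{lem:start}(b) is trivial and the square/non-square test always concerns $F(0)$ alone--- and keeping the two cases $q\equiv 1$ and $q\equiv 3\pmod 4$ straight when converting the quadratic character of $(-1)^nF(0)$ into that of $F(0)$.
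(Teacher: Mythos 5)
Your argument is correct and follows essentially the same route as the paper: apply Lemma~\ref{lem:start}(b) once to get irreducibility of $F(X^2)$, observe that from $F(X^2)$ onward the degree is even and the constant term stays $F(0)$, so each further step reduces to whether $F(0)$ is a non-square, and finally convert that condition on the quadratic character of $F(0)$ into the stated congruence/parity conditions. The paper merely compresses your induction into the remark that $F(X^{2^k})$ for $k\geq 3$ is irreducible exactly when $F(X^4)$ is; your explicit propagation of reducibility via $F_{j+1}=P(X^2)Q(X^2)$ is a welcome extra detail but not a different method.
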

\begin{proof} 
The irreducibility of $F(X^2)$ follows directly from Lemma \ref{lem:start}\,(b).
Let $k = 2$. Using again \ref{lem:start}\,(b) the polynomial $F(X^4)$ is irreducible
if and only if $(-1)^{2n}F(0) = F(0)$ is a non-square in $\mathbb{F}_q$. The latter is not
fulfilled if and only if $(-1)^n$ is a non-square in $\F$, that is if and only if $n$ is odd and $q \equiv 3 \pmod 4$. It remains to observe
that $F(X^{2^k})$ for $k \geq 3$ is irreducible if and only if $F(X^4)$ is irreducible
over $\F$. \qed
\end{proof}

An important feature of Corollary \ref{cor:blowup} is that it ensures the existence of sparse irreducible polynomials of degree $2^kn$, as the following example demonstrates:

\begin{example}
The polynomial $F(X)=X^6+X+3$ is irreducible over $\mathbb{F}_{19}$ and  $(-1)^6F(0)=3$ is a non-square in $\mathbb{F}_{19}$. By Corollary \ref{cor:blowup}  the 
trinomial $X^{2^k6}+X^{2^k}+3$ is irreducible over $\mathbb{F}_{19}$ for any $k\geq 1$.

\end{example}

Recall that the order of an irreducible polynomial $F(X)\ne X \in \F[X]$ of degree $n$
is defined as the order of its zero in $\EF$. We denote it by $ord(F(X))$.

For our next discussions we need the following observation on the irreducible  polynomials satisfying $C(X) = (-1)^nC(-X)$.

\begin{proposition} \label{prop:even_coef}
Let $q$ be odd, $n\geq 2$ and $C(X) = X^n +\sum_{i=0}^{n-1}c_{i}X^{i}$ $ \in \F[X]$ (we set $c_n=1$) be an
irreducible polynomial. Then the following statements are equivalent:
\begin{itemize}
\item[(a)] $C(X) =  D(X^2)$ for some $D(X) \in \F[X]$, that is $c_i=0$ for all odd indices $1\leq i \leq n$.
\item[(b)] $C(X) = (-1)^nC(-X)$.
\item[(c)] The order of $C(X)$ divides $2(q^{n/2}-1)$.  
\end{itemize}
\end{proposition}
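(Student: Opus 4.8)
The plan is to establish $(a)\Leftrightarrow(b)$ by an elementary comparison of coefficients and then $(b)\Leftrightarrow(c)$ by examining a single root of $C$; along the way it emerges that all three conditions force $n$ to be even, which is also what makes the exponent $n/2$ occurring in (c) meaningful in the first place. For $(a)\Leftrightarrow(b)$ I would write $C(X)=\sum_{i=0}^n c_iX^i$ and use $(-1)^nC(-X)=\sum_i(-1)^{n+i}c_iX^i$, so that (b) is equivalent to $c_i=0$ for every index $i$ with $n+i$ odd. If $n$ were odd this would already force $c_0=0$, contradicting irreducibility of $C$ (which has degree $\ge 2$, hence $C(0)\ne 0$); so $n$ must be even, and then the vanishing condition reads precisely $c_i=0$ for all odd $i$, i.e.\ (a). Conversely (a) forces $n=2\deg D$ even and $C(-X)=C(X)$, hence $(-1)^nC(-X)=C(X)$. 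From here on I would set $n=2m$.

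For the remaining equivalence I would fix a root $\be\in\EF$ of $C$, so that $C$ is the minimal polynomial of $\be$, $\be$ is proper in $\EF$, $\be\ne 0$, and $\ord(C(X))=\ord(\be)$ in $\EF^{*}$. For $(b)\Rightarrow(c)$: since $n$ is even, (b) says $C(X)=C(-X)$, so $-\be$ is also a root of $C$ and therefore a Frobenius conjugate of $\be$, say $-\be=\be^{q^{j}}$ with $1\le j\le n-1$ (the value $j=0$ is excluded because $\be\ne 0$). Applying the $q^{j}$-power map once more gives $\be^{q^{2j}}=\be$, hence $n\mid 2j$; with $1\le j\le n-1$ the only possibility is $j=m$. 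Thus $\be^{q^{m}}=-\be$, i.e.\ $\be^{q^{m}-1}=-1$, and squaring yields $\be^{2(q^{m}-1)}=1$, that is $\ord(\be)\mid 2(q^{m}-1)$, which is (c).

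For $(c)\Rightarrow(b)$: if $\ord(\be)\mid 2(q^{m}-1)$ then $(\be^{q^{m}-1})^{2}=1$, so $\be^{q^{m}-1}=\pm 1$. The value $+1$ would give $\be^{q^{m}}=\be$, i.e.\ $\be\in\mathbb{F}_{q^{m}}$, contradicting that $\be$ is a proper element of $\EF=\mathbb{F}_{q^{2m}}$ (and $m<2m$); hence $\be^{q^{m}}=-\be$. Therefore $-\be$ is a conjugate of $\be$ over $\F$ and so a root of the irreducible polynomial $C$, while the minimal polynomial of $-\be$ is $(-1)^{n}C(-X)$ — a factorisation already recorded in the proof of Lemma \ref{lem:start}. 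Comparing these two monic irreducible polynomials gives $C(X)=(-1)^{n}C(-X)$, which is (b). (If $n$ is odd one checks directly that (a) and (b) both fail, consistent with the understanding that none of the three conditions holds in that case.)

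I do not expect a genuine obstacle here: the only slightly delicate points are bookkeeping ones — noticing that $n$ must be even before (c) is even meaningful, and extracting $j=m$ from $n\mid 2j$ with $1\le j\le n-1$. The one real idea is that the symmetry $C(X)=(-1)^nC(-X)$ is exactly the statement that $-\be$ is a Frobenius conjugate of $\be$, which, once $n$ is known to be even, can only be the ``middle'' conjugate $\be^{q^{n/2}}$, and this in turn is precisely the order condition in (c). The only thing I would want to make sure of is that the computation in the proof of Lemma \ref{lem:start} really does identify the minimal polynomial of $-\be$ as $(-1)^nC(-X)$, which it does.
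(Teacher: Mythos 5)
Your proof is correct, and for the second half it takes a genuinely different route from the paper. The equivalence $(a)\Leftrightarrow(b)$ is handled the same way in both (coefficient comparison, with irreducibility ruling out the odd-degree case via $c_0\ne 0$). For the order condition, however, the paper proves $(a)\Leftrightarrow(c)$ directly by passing to $\al:=\be^2$: condition (a) says $C(X)=D(X^2)$ with $D$ the minimal polynomial of $\al$, which holds exactly when $\al$ lies in (and is proper in) $\mathbb{F}_{q^{n/2}}$, i.e.\ exactly when $\ord(\al)\mid q^{n/2}-1$, i.e.\ exactly when $\ord(\be)\mid 2(q^{n/2}-1)$. You instead prove $(b)\Leftrightarrow(c)$ by identifying the symmetry $C(X)=C(-X)$ with the statement that $-\be$ is the ``middle'' Frobenius conjugate $\be^{q^{n/2}}$, and then reading $\be^{q^{n/2}-1}=-1$ as the order condition. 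The two arguments are essentially dual --- yours works with $\be$ and the conjugate $-\be$, the paper's with the square $\be^2$ and the subfield $\mathbb{F}_{q^{n/2}}$ --- and are of comparable length and difficulty; the paper's version has the mild advantage that it explicitly produces the polynomial $D$ asserted in (a) (which is reused elsewhere, e.g.\ in Corollary \ref{cor:minpol} and Remark \ref{order-con2}), while yours isolates the clean observation $\be^{q^{n/2}}=-\be$, which is exactly the fact implicitly used at the end of the proof of Lemma \ref{lem:start}. Your extraction of $j=n/2$ from $n\mid 2j$, $1\le j\le n-1$, and your exclusion of $\be^{q^{n/2}-1}=+1$ via properness of $\be$ are both sound.
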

\begin{proof}
The degree $n$ of  $C(X) = D(X^2)$ is even, and hence in such a case $$ (-1)^nC(-X) = D((-X)^2) =D(X^2) = C(X),$$ proving
the implication (b) from (a). Next we show that for an irreducible polynomial $C(X)$ of degree $n\geq 2$ from (b) follows (a).
Suppose $C(X) = (-1)^nC(-X)$. If $n$ is even the considered equality reduces to $C(X) =C(-X)$. The latter is
satisfied if and only if $C(X) = \sum_{i=0}^{n/2}c_{2i}X^{2i}$ or equivalently if
$C(X) = D(X^2)$ for an appropriate polynomial $D(X)$ of degree $n/2$.
For $n$ odd we get $C(X) =-C(-X)$, which forces   $c_{i} =0$ for all even indicies $i$,
in particular $c_0=0$ too. The irreducibility of  $C(X)$ yields then $C(X)=X$.
Hence (a) and (b) are indeed equivalent. Next we show equivalence of (a) and (c).
Let $\alpha$ be a root of $D(X)$ and $\beta $ of $C(X)$.  Then $\be^2 = \al$. Since $\al \in \mathbb{F}_{q^{n/2}}$, from (a) follows (c). Suppose (c) holds and $\be \in \EF$ is a root of $C(X)$.
 Then $\al:= \beta^2$ has order dividing $(q^{n/2}-1)$, and hence 
 $\al$ is  in $\mathbb{F}_{q^{n/2}}$. Further $\alpha$ is proper in $\mathbb{F}_{q^{n/2}}$,
since $\beta$ is proper in $\EF$. This implies that (a) holds with
$D(X)$ being the minimal polynomial of $\alpha$. 
\qed \end{proof}

\hspace*{-0.3cm} The next result is obtained by reversing arguments of Lemma \ref{lem:start} and 
\mbox{Proposition \ref{prop:even_coef}}.

\begin{corollary}\label{cor:prod-irred} Let $q$ be odd and $n \geq 1$.
Let $C(X) = X^n+ \sum_{i=0}^{n-1}c_iX^i$ be a monic irreducible polynomial of degree $n$ over $\F$ (we set here $c_n=1$). Then  there is a polynomial $A(X) \in \F[X]$ of degree $n$ such that
$$ C(X) \cdot (-1)^nC(-X) = A(X^2).$$ 
More precisely,
\begin{equation} \label{form}
A(X) = (-1)^n\sum_{j=0}^n\sum_{u=0}^{2j}(-1)^uc_uc_{2j-u}X^j, \mbox{ with } 
c_s = 0 \mbox{ for } s >n.
\end{equation}
\begin{itemize}\item[(a)] The polynomial $A(X)$ is irreducible over $\F$ if and only if 
there is at least one odd $1 \leq i \leq n$ with $c_i \ne 0$.
\item[(b)] If $A(X)\ne X$ is irreducible then it is the minimal polynomial of
$\beta^2$, where 
 $\beta \in \EF$ is a zero of $C(X)$. In this case $ord(A(X)) = 
ord(C(X))/\gcd(2,ord(C(X)))$. 
\end{itemize}
\end{corollary}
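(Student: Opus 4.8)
The plan is to determine $A$ by a symmetry argument, obtain the formula \eqref{form} by a direct expansion, and then read off (a) and (b) from how the Frobenius acts on a root of $C$.

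First I would set $G(X):=C(X)\cdot(-1)^nC(-X)$ and observe that $G(-X)=C(-X)\cdot(-1)^nC(X)=G(X)$, so $G$ involves only even powers of $X$ and hence $G(X)=A(X^2)$ for a unique $A\in\F[X]$; since $G$ is monic of degree $2n$, the polynomial $A$ is monic of degree $n$, which gives the existence claim. For \eqref{form} I would simply expand $C(X)C(-X)=\big(\sum_i c_iX^i\big)\big(\sum_k(-1)^kc_kX^k\big)$, take the coefficient of $X^m$, note that only the even $m=2j$ survive (same $X\mapsto-X$ symmetry, applied now to $C(X)C(-X)$), simplify $(-1)^{2j-u}=(-1)^u$, multiply by $(-1)^n$ and substitute $X^2\mapsto X$; the convention $c_s=0$ for $s>n$ just records that the $j$-sum stops at $n$. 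This part is routine.

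Next, let $\be\in\EF$ be a zero of $C$; then $\be$ is proper in $\EF$ and $(-1)^nC(-X)$ is the minimal polynomial of $-\be$ (as noted before Lemma~\ref{lem:start}), so by \eqref{eq:min-pol} the $2n$ zeros of $A(X^2)$ are $\be^{q^i}$ and $-\be^{q^i}$ for $0\le i\le n-1$. Consequently every zero of $A$ has the form $(\be^{q^i})^2=(\be^2)^{q^i}$, i.e., writing $\al:=\be^2\in\EF$, the zeros of $A$ all lie among the conjugates of $\al$ over $\F$. I would then split on whether $\al$ is proper in $\EF$. If it is, then $\al$ is a square in $\EF$ and Lemma~\ref{lem:start}(c) applied to $\al$ says that the minimal polynomial of $\al$, composed with $X^2$, equals $C(X)\cdot(-1)^nC(-X)=A(X^2)$; since $P\mapsto P(X^2)$ is injective on $\F[X]$, this forces $A$ to be the minimal polynomial of $\al$ — in particular $A$ is irreducible, which proves the first sentence of (b) — and moreover $-\be$ cannot be a conjugate of $\be$ (otherwise $\al^{q^i}=\al$ for some $1\le i\le n-1$, contradicting properness), whence $C(X)\ne(-1)^nC(-X)$ and, by Proposition~\ref{prop:even_coef}, some $c_i$ with $i$ odd is nonzero. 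If instead $\al$ is not proper, say $\al\in\mathbb{F}_{q^m}$ for a proper divisor $m\mid n$, then $\be$ is a root of $X^2-\al\in\mathbb{F}_{q^m}[X]$, which forces $n/m\le 2$, hence $m=n/2$ and $n$ even; then $\ord(C)=\ord(\be)$ divides $2(q^{n/2}-1)$, so Proposition~\ref{prop:even_coef} gives $C(X)=D(X^2)$ with $D$ the irreducible minimal polynomial of $\al$ of degree $n/2\ge1$, and therefore $A(X^2)=D(X^2)^2$, i.e., $A(X)=D(X)^2$ is reducible while every odd-index $c_i$ vanishes. Putting the two cases together proves (a). For the order in (b): when $A\ne X$ is irreducible we are in the first case, so $\al=\be^2\ne0$, $C\ne X$, and since $\EF^\ast$ is cyclic, $\ord(A)=\ord(\be^2)=\ord(\be)/\gcd(2,\ord(\be))=\ord(C)/\gcd(2,\ord(C))$. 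The degenerate case $n=1$ (where $A(X)=X-c_0^2$ is irreducible, $c_1=1\ne0$, and the order formula is immediate) I would dispatch separately at the start.

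The step I expect to be the main obstacle is the dichotomy above: verifying that in the ``proper'' branch $A$ is really the minimal polynomial of $\al$ (and hence irreducible), not merely a polynomial whose roots happen to be conjugates of $\al$, and matching the ``not proper'' branch precisely with the vanishing of the odd-index coefficients through Proposition~\ref{prop:even_coef}. The symmetry argument, the coefficient expansion, and the cyclic-group computation of orders are all bookkeeping by comparison.
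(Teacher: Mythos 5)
Your proposal is correct and follows essentially the same route as the paper: the $X\mapsto -X$ symmetry for existence of $A$, direct expansion for \eqref{form}, the observation that $A(X^2)=\prod_i(X^2-\beta^{2q^i})$ so that everything reduces to whether $\beta^2$ is proper in $\EF$, and Proposition~\ref{prop:even_coef} to translate that into the vanishing of odd-index coefficients. The only cosmetic difference is that you organize (a) as a dichotomy on the properness of $\al=\beta^2$ (using Lemma~\ref{lem:start}(c) and a degree argument for the subfield case), whereas the paper cites Corollary~\ref{cor:minpol} directly for the irreducible direction; you also spell out the order computation and the $n=1$ case, which the paper leaves implicit.
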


\begin{proof}
Set $F(X) := C(X) \cdot (-1)^nC(-X)$. Since by construction $F(X) = F(-X)$,
there is a polynomial $A(X)$ satisfying $F(X)=A(X^2)$. Direct calculations show that $A(X)$
is given by the formula (\ref{form}). To prove (a),
note that if  $c_i=0$ for  all odd $i$, then $C(X) = D(X^2)$ for a certain
$D(X) \in \F[X]$.  Hence $A(X^2) = (-1)^nC(X)C(-X) = D(X^2)^2$, implying $A(X) = D(X)^2$.  
So it remains to show 
that $A(X)$ is irreducible if there is at least one odd $i$ with $c_i\ne 0$.
Let $C(X)$ be the minimal polynomial of $\beta \in \EF$. Then $A(\be^2)=0$ and
thus minimal polynomial of $\be^2$ divides $A(X)$. Since there is an odd $i$ with $c_i \ne 0$,
 Corollary \ref{cor:minpol} implies that $\be^2$ is not contained in any proper
subfield of $\EF$.  This shows that the minimal polynomial of $\beta^2$ has degree $n$,
and hence $A(X)$ is the minimal polynomial of it. This proves also (b).
\qed
\end{proof}

In next section we use Corollary \ref{cor:prod-irred} to construct irreducible polynomials
from a given one. For this construction also the following easy observation is of interest.

\begin{proposition}\label{prop:x+1}
Let $B(X) = A(X^2) \in \F[X]$ be monic irreducible polynomial of degree $2n \geq 2$ with $\gcd(n,q)=1$ and $q$ odd. Then for any $a \in \F, a \ne 0,$ the polynomial $F(X) = B(X+a)$ is  irreducible over $\F$ and $F(X)$ has at least one coefficient $f_i \ne 0$ with odd $0\leq i <2n$.
\end{proposition}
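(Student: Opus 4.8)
The irreducibility of $F$ is immediate: were $F=PQ$ with $P,Q\in\F[X]$ nonconstant, then $B(X)=F(X-a)=P(X-a)\,Q(X-a)$ would be a nontrivial factorization of $B$, contradicting its irreducibility. Since $B$ is monic of degree $2n$, so is $F$, whence $f_{2n}=1$; thus the genuine content of the statement is the existence of a nonzero $f_i$ with $i$ odd, $0\le i<2n$, which I would establish by contradiction.

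Suppose $f_i=0$ for every odd $i$ with $0\le i<2n$. Then $F(X)=G(X^2)$ for some monic $G(X)\in\F[X]$ of degree $n$. Fix a root $\beta$ of $B$ in $\mathbb{F}_{q^{2n}}$; as $B$ is irreducible of degree $2n$, $\beta$ is proper in $\mathbb{F}_{q^{2n}}$. Because $a\in\F$, the element $\beta-a$ is a root of $F$, and $F$, being monic irreducible of degree $2n$, is its minimal polynomial over $\F$; in particular $\beta-a$ is proper in $\mathbb{F}_{q^{2n}}$ as well. Now I would invoke Corollary~\ref{cor:minpol} twice: applied to $\beta$ with $B(X)=A(X^2)$ it gives $\beta^2\in\EF$, and applied to $\beta-a$ with $F(X)=G(X^2)$ it gives $(\beta-a)^2\in\EF$.

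These two memberships are incompatible. Indeed $(\beta-a)^2-\beta^2=a^2-2a\beta\in\EF$, and since $a^2\in\F\subseteq\EF$ while $2a$ is a nonzero element of $\F$ (here $q$ odd and $a\ne0$ are used), it follows that $\beta\in\EF=\mathbb{F}_{q^n}$. But $\mathbb{F}_{q^n}$ is a proper subfield of $\mathbb{F}_{q^{2n}}$ for $n\ge1$, contradicting the properness of $\beta$ in $\mathbb{F}_{q^{2n}}$. Hence $F$ must have a nonzero coefficient of odd index below $2n$.

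I do not anticipate a real obstacle: the crux is simply the observation that ``$\beta^2\in\EF$ and $(\beta-a)^2\in\EF$'' already pins down $\beta$ itself, after which the argument collapses to one line. The only points needing a moment's care are verifying that $F$ is correctly identified as the minimal polynomial of the proper element $\beta-a$ (so that Corollary~\ref{cor:minpol} applies), and noticing that the route above uses only $q$ odd and $a\ne0$; the hypothesis $\gcd(n,q)=1$ becomes relevant only in an alternative, field-theory-free argument, where $F(X)=G(X^2)$ together with the fact that $B(X)=A(X^2)$ is an even polynomial forces $F(X)=F(X+2a)$, which is impossible because $F(X+2a)-F(X)$ then has degree $2n-1$ with leading coefficient $4na\ne0$.
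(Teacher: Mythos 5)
Your proof is correct, but it takes a genuinely different route from the paper's. The paper argues by a one-line direct computation: writing $B(X)=\sum_{i=0}^{n}a_iX^{2i}$, only the leading term $(X+a)^{2n}$ of $F(X)=B(X+a)$ contributes to the coefficient of $X^{2n-1}$, which therefore equals $2na$ and is nonzero precisely because $q$ is odd, $a\ne 0$ and $\gcd(n,q)=1$. This pinpoints a \emph{specific} nonzero odd coefficient, namely $f_{2n-1}$, and shows exactly where the hypothesis $\gcd(n,q)=1$ is used. Your argument by contradiction via Corollary~\ref{cor:minpol} is also valid: if $F(X)=G(X^2)$, then $F$ is the minimal polynomial of the proper element $\beta-a$ of $\EFs$, so both $\beta^2$ and $(\beta-a)^2$ lie in $\EF$, forcing $2a\beta\in\EF$ and hence $\beta\in\EF$, contradicting that $\beta$ is proper in $\EFs$. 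You are right that this route never invokes $\gcd(n,q)=1$, so it actually proves a slightly stronger statement (that hypothesis can be dropped if one only wants \emph{some} odd-index coefficient to be nonzero); the trade-off is that it is purely existential and gives no information about which coefficient survives. Your sketched ``field-theory-free'' alternative, comparing the $X^{2n-1}$-coefficients of $F(X+2a)$ and $F(X)$, is essentially the paper's computation in disguise and, like it, genuinely needs $\gcd(n,q)=1$.
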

\begin{proof}
Clearly $F(X) \in \F[X]$ is irreducible. Next we show that the coefficients
of $X^{2n-1}$ in it is $2na$, which is non-zero under our assumptions.
Let 
$$
B(X) = \sum_{i=0}^{n}a_{i}X^{2i}.
$$
Then
\begin{eqnarray*}
F(X) &=& B(X+a) = \sum_{i=0}^{n}a_{i}(X+a)^{2i} \\ &= & (X+a)^{2n} +\sum_{i=0}^{n-1}a_{i}(X+a)^{2i} \\ &=& X^{2n} + 2naX^{2n-1} + \ldots.
\end{eqnarray*}
\qed
\end{proof}

\section{Recurrent construction of irreducible polynomials of fixed degree}
\label{sec:irred}

In this section using Corollary \ref{cor:prod-irred} we describe two recursive constructions of irreducible polynomials of degree $n$ from 
a given irreducible polynomial $C(X)$ of degree $n$. In Construction \ref{con1}, we assume that
the order of the initial polynomial $C(X)$ is known and use it to terminate the construction.
In Construction \ref{con2} the order of the initial polynomial $C(X)$ is supposed to be unknown.
 The number of performed iterations
in Construction \ref{con2} can be then used to compute the order of $C(X)$.\\

For an odd natural number $t$,  we denote by $ord_t(2)$  the order of $2$ modulo $t$.
\begin{construction}\label{con1}
Let $q$ be odd, $C(X) = X^n +\sum_{j=0}^{n-1}c_jX^j \in \F[X], \, C(X) \ne X$ and $c_n=1$, be a given irreducible polynomial of degree $n \geq 1$. 
Further suppose the order $ord(C(X)) = 2^rt$ is known, where $r\geq 0$ and $t\geq 1$ odd.
Given a polynomial $C_i$ with 
$i \geq 0$ set $C_{i+1}(X)$ to denote the polynomial of degree $n$ constructed from $C_{i}(X)$ as follows
\begin{equation}\label{step1}
C_{i+1}(X^2) := (-1)^nC_i(X)C_i(-X).
\end{equation}

Put $C_0(X) = C(X)$. 

If $0\leq i \leq r-1$ and the polynomial $C_{i}$  has at least one non-zero odd coefficient,
then continue with \emph{(\ref{step1})} to construct $C_{i+1}$, otherwise stop.

For $r \leq i \leq r+ord_t(2)-2$ with $t>1$ construct $C_{i+1}$ by \emph{(\ref{step1})}. \qed
\end{construction}

The next theorem  describes the performance and proves the correctness of Constructions \ref{con1}.

\begin{theorem}
Let $C(X)$ be as in Construction \ref{con1}. Then the following holds:
 \begin{itemize}
 \item[(1)] If $n$ is odd or $n$ is even and $t$ does not divide
$q^{n/2}-1$, Construction \ref{con1}
 produces one polynomial of order $2^it$ for each $1\leq i \leq r$ and $ord_t(2)$ \change{\sout{different}}
\footnote{\change{The number of different such polynomials is  $ord_t(2)/d$ for a certain divisor $d\geq 1$ of $n$, see \cite{graner-kyureg}.}} polynomials of order $t$ (including $C_0$).
 \item[(2)] If $n$ is even and  $s$, $1\leq s \leq r-2$, is minimal such that $2^{r-s}t$  divides $2(q^{n/2}-1)$, then Construction \ref{con1} yields 
 one polynomial of order $2^{r-i}t$ for each $0\leq i \leq s$ (including $C_0$) and stops.
 \end{itemize}
 \end{theorem}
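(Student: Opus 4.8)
The plan is to run an induction on the step index $i$, carrying the invariant ``$C_i$ is monic irreducible of degree $n$ with a prescribed order'', and to read off the halting behaviour from the two structural results already established. The two engines are: (i) Corollary~\ref{cor:prod-irred}, which guarantees that whenever $C_i$ is monic irreducible of degree $n$ with at least one non-zero coefficient in odd degree, the polynomial $C_{i+1}$ defined by~(\ref{step1}) is again monic irreducible of degree $n$, is the minimal polynomial of $\beta^2$ for a root $\beta$ of $C_i$, and satisfies $ord(C_{i+1})=ord(C_i)/\gcd(2,ord(C_i))$ — each $C_i$ differs from $X$, its root being a non-zero field element, so Corollary~\ref{cor:prod-irred}(b) applies throughout; and (ii) Proposition~\ref{prop:even_coef}, which says that a monic irreducible of degree $n\ge 2$ has \emph{no} non-zero odd coefficient precisely when $n$ is even and its order divides $2(q^{n/2}-1)$ (for $n$ odd the leading coefficient $c_n=1$ already lies in odd degree). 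I would also isolate once the elementary arithmetic fact used repeatedly: writing $q^{n/2}-1=2^a\ell$ with $\ell$ odd, so $a\ge1$, one has $2^jt\mid 2(q^{n/2}-1)$ if and only if $t\mid q^{n/2}-1$ and $j\le a+1$; this is a routine comparison of $2$-adic valuations and odd parts.

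For part (1) the goal is to show the construction never terminates early and its order sequence is forced to be $2^rt,2^{r-1}t,\dots,2t,t,t,\dots$. If $n$ is odd this is immediate: every $C_i$ carries the non-zero odd coefficient $c_n=1$, so the construction runs through both phases, and by (i) each $C_{i+1}$ is monic irreducible of degree $n$ with $ord(C_{i+1})=ord(C_i)/\gcd(2,ord(C_i))$, which halves the $2$-part of the order while $r-i\ge1$ and keeps it at $t$ once $i\ge r$. If $n$ is even with $t\nmid q^{n/2}-1$, the arithmetic fact gives $2^jt\nmid 2(q^{n/2}-1)$ for \emph{all} $j\ge0$, so by (ii) no $C_i$ — whose order is one of $2^rt,\dots,2t,t$ — has all odd coefficients zero, and the same induction goes through for every $i$ the construction reaches. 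Running it to completion then produces $C_0,\dots,C_{r-1}$ of orders $2^rt,\dots,2t$, i.e. one polynomial of order $2^it$ for each $1\le i\le r$ (the one of order $2^rt$ being $C_0$), followed by $C_r,C_{r+1},\dots,C_{r+ord_t(2)-1}$, which are $ord_t(2)$ polynomials of order $t$. I would flag the caveat that these last $ord_t(2)$ polynomials need not be pairwise distinct — this is exactly the slip corrected by the footnote to the theorem — so I would not try to prove distinctness.

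For part (2), with $n$ even, I would let $2^a$ be the exact power of $2$ dividing $q^{n/2}-1$ and observe that the hypothesis ``$s$, $1\le s\le r-2$, minimal with $2^{r-s}t\mid 2(q^{n/2}-1)$'' forces, via the arithmetic fact, $t\mid q^{n/2}-1$ and $s=r-a-1$ (so $1\le s\le r-2$ is just $1\le a\le r-2$). The same induction then shows $C_i$ is monic irreducible of degree $n$ with $ord(C_i)=2^{r-i}t$ for $0\le i\le s$: for $0\le i<s$ we have $r-i>a+1$, hence $2^{r-i}t\nmid 2(q^{n/2}-1)$, so $C_i$ has a non-zero odd coefficient and $C_{i+1}$ is built with order $2^{r-i-1}t$; but at $i=s$, $ord(C_s)=2^{r-s}t=2^{a+1}t$ divides $2(q^{n/2}-1)$ because $t\mid q^{n/2}-1$, so by (ii) all odd-degree coefficients of $C_s$ vanish and, since $s\le r-1$, the construction halts at $C_s$. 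This yields exactly $C_0,\dots,C_s$ of orders $2^rt,\dots,2^{r-s}t$ — one polynomial of order $2^{r-i}t$ for each $0\le i\le s$, including $C_0$ — and then terminates, as claimed.

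I do not expect a conceptual obstacle: the proof is the induction above together with the $2$-adic bookkeeping. The single point that truly needs care is translating the stopping test ``$C_i$ has no non-zero odd coefficient'' into the divisibility ``$ord(C_i)\mid 2(q^{n/2}-1)$'' through Proposition~\ref{prop:even_coef}, and then verifying that the first index at which this holds is \emph{exactly} the $s$ singled out in the statement (equivalently $s=r-a-1$), so that the iteration count and the list of orders come out as asserted. A secondary thing to keep in view is the degenerate regime $n$ even with $a\ge r-1$, where $C_0$ already satisfies $C_0(X)=D(X^2)$ and the construction produces nothing; it falls outside both (1) and (2) and so needs no treatment.
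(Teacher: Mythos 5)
Your proposal is correct and follows essentially the same route as the paper's own proof: the order recursion from Corollary~\ref{cor:prod-irred}\,(b) combined with the stopping criterion of Proposition~\ref{prop:even_coef}\,(c). You merely spell out the $2$-adic bookkeeping (identifying the stopping index as $s=r-a-1$) and the distinctness caveat more explicitly than the paper's rather terse argument does, which is a welcome addition rather than a deviation.
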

 
 \begin{proof}
By Corollary \ref{cor:prod-irred} the order of $C_{i+1}(X)$ is equal to $ord(C_i)/\gcd(ord(C_i(X)),2)$ for any $i\geq 0$.
For $0\leq i \leq r-1$ Construction \ref{con1} terminates after producing $C_{i+1}(X)$ if  $C_{i+1}$ has all its odd coefficients equal to $0$.
 This occurs 
if and only if $n$ is even and $ord(C_{i+1})$ divides $2(q^{n/2}-1)$ 
by Proposition \ref{prop:even_coef}\,(c). Otherwise all produced polynomials have a non-zero odd coefficient
and   the construction will stop
after constructing $ord_t(2)$ polynomials of order $t$.
 \qed
\end{proof}

In the case when the order of initial polynomial $C(X)$ is unknown we modify the stopping condition in
Construction \ref{con1}. The stopping condition in this case relies on the following observation: If $q^n-1 = 2^u w$ with $w$ odd, then clearly
$2^{u+1}$ does not divide order of $C(X)$. Hence after $h \leq u$ steps of construction,
 polynomial $C_h$ will have an odd order $t$, and after \change{at most} further $ord_t(2)$ steps the construction
will produce $C_h$ again. Indeed if $\gamma \in \EF$ is a zero of $C_h$ then $\gamma^{2^i}$ is a zero of $C_{h+i}$
and $\gamma^{2^{ord_t(2)}}=\gamma.$

\begin{construction}\label{con2}
Let $q$ be odd, $C(X) = X^n +\sum_{j=0}^{n-1}c_jX^j \in \F[X], \, C(X) \ne X$ and $c_n=1$, be a given irreducible polynomial of degree $n \geq 1$. Further, let $q^n-1  = 2^uw$ with $u\geq 0$ and an odd $w\geq 1$.\\
 
Set $C_0(X) := C(X)$, if the polynomial $C_{0}$  has at least one non-zero odd coefficient,
continue with \emph{(\ref{step1})} to construct $C_{1}$ otherwise stop.

For $1\leq i \leq u$, if the polynomial $C_{i}$  has at least one non-zero odd coefficient
and $C_{i} \notin \{ C_0, \ldots , C_{i-1}\}$,
continue with \emph{(\ref{step1})} to construct $C_{i+1}$ otherwise stop. 

For $i > u$,
while $C_{i} \notin \{ C_0, \ldots , C_{u}\}$,  construct $C_{i+1}$, otherwise stop.
\end{construction}

\begin{remark}\label{order-con2}
Construction \ref{con2}  provides  information on the order of the initial polynomial $C_0$.
Indeed, suppose the last constructed polynomial is $C_k$ with $k \geq 1.$ 
If $C_k(X) = D(X^2)$ then order of $C_0$ is $2^kv$ with $v$ dividing $2(q^{n/2}-1)$ by Proposition \ref{prop:even_coef}. Otherwise suppose
 $C_k = C_l$ with $l<k$.  Then the order of $C_0$ is $2^l m$ where $m$ is an odd divisor of $q^n-1$ with
\change{$k-l = ord_2(m)/d$ with $d\geq 1$ dividing $n$}. 

This observation can also be adapted for computing the order
of an elemenent $\alpha \in \EF$, provided that its minimal
 polynomial $A(X)$  over $\F$ is known. The minimal polynomial can be computed
using  formula (\ref{eq:min-pol}). However it is in general not efficient since
the computations are done in $\EF$.  An alternative way is 
solving a system of linear equations over $\F$, as suggested in \cite[page\,112]{berlekamp-book}.
\end{remark}

\begin{remark}
Construction \ref{con1} produces \change{$ord_t(2)/d$} different irreducible polynomials of degree
$n$ and order $t$ with $d\geq 1$ dividing $n$. In generic case this is going to be a  large amount of polynomials, since there are good indications that the  average order of $2$
modulo an odd integer is large \cite{kurlberg-pomerance,pomerance-shparlinski}.
 An interesting discussion on the topic can be found in \cite{slides-pomerance}. 
\end{remark}

Corollary \ref{cor:blowup} and Construction 
 \ref{con1} or \ref{con2} can be combined to construct polynomials of degree $2^kn$ from a suitable
polynomial of degree $n$: Let $k\geq 1$. Suppose an irreducible polynomial $U(X)\in \F[X]$ satisfies
the conditions of Corollary \ref{cor:blowup} and 
 $H(X) = U(X^{2^k})$ is irreducible as well. Observe that if $H(X)$  is used  as an 
initial polynomial in Construction \ref{con1} or \ref{con2}, then no new  
irreducible polynomials will be produced, since
all odd coefficients in $H(X)$ are equal to zero.  Instead we can take the irreducible polynomial
$H(x+a)$ with  $a \in \F$, which  by Proposition \ref{prop:x+1}  has a non-zero odd coefficient.
This observation allows effective constructions of irreducible polynomials, which are of particular interest  for
small  $n$ and large $k$. The next example illustrates these ideas for $q=19, n=3$ and $k=2$.

\begin{example} \label{ex:u-h} 
Take $q = 19$. Then $U(X) = X^3+X+1$ is irreducible over $\mathbb{F}_{19}$.
By Corollary \ref{cor:blowup} is also $H(X) := U(X^2)=X^6+X^2+1$ irreducible over 
$\mathbb{F}_{19}$. The order of this polynomial is $1524=2\cdot(19^3-1)/9= (19^6-1)/30870$.  By 
Proposition \ref{prop:x+1}  also 
$C(X):=H(X+1)= X^6 + 6X^5 + 15X^4 + X^3 + 16X^2 + 8X + 3$ is irreducible over $\mathbb{F}_{19}$, which
has an odd  non-zero coefficient. The order of
 $C(X)$ is  $9409176 = (19^6-1)/5 = 2^3\cdot 1176147$. If we initialize Construction
\ref{con1} with $C_0(X) =C(X)$, after 3 iterations we get the polynomial $C_3(X)= X^6 + X^5 + 18X^3 + 2X^2 + 7X + 6$ 
of odd order $9409176/8 = 1176147$. Altogether the construction
 yields
$3 + ord_{1176147}(2) = 885$ irreducible polynomials of degree $6$, and $882$ of them have order $1176147$. 
Among these polynomials 9 are with exactly 5 nonzero coefficients, and 198 are with exactly 6 and
the remaining 678 polynomials have all 7 coefficients non-zero.

If we repeat the construction choosing $C(X)$ to be $H(X+5)$. Then $C_0(X) =C(X)$ is primitive,
that is $ord(C_0(X)) = 19^6-1$. In this case we get 1767 irreducible 
polynomials, among which there are 3 polynomials with exactly 4
non-zero coefficients. 

Table\,1 summarizes our numerical calculations for  $H(X+a)$ with all $a \in \mathbb{F}_{19}$.
Observe that  $H(X+a)$ and $H(X-a)$ can be obtained from each other
by substituting $-X$. Hence their orders are either equal (in this case the order is even) or
 differ by factor 2 (and then one of them is odd).
This explains the similarities in behavior of data in Table 1 for polynomials $H(X+a)$ and $H(X-a)$. 
 Our computations are done with SageMath\footnote{We thank Maurin Graner for her support
with these calculations.}. \qed 

\end{example}

It is interesting to note that in Example \ref{ex:u-h} we start with a polynomial $H(X)$ which has
a  small order and then obtain polynomials $H(X+a)$ with large orders, six of them are even of
largest possible order $q^6-1$, that is they are primitive.
A result of   Davenport (for $q$ prime) and Carlitz (any $q$) states, that for  an irreducible polynomial $F(X)$ of sufficiently large degree $n$ over $\F$ there is always
an element $a \in \F$ such that $F(X+a)$ is primitive. However little is known about
the number of element $a \in \F$ with $F(X+a)$ of a specified order.
 Stephen Cohen generalized this result in several directions, for latest developments in this area see \cite{cohen-kap}. \\

\begin{table} 
\renewcommand{\arraystretch}{1.2}
\begin{minipage}[t]{1.0\textwidth}
\caption{{\bf Numerical results on polynomials of Example \ref{ex:u-h}}. Here $N = 19^6-1$ and $k^l$ in
Weight distribution indicates that the construction produces $l$ polynomials with exactly $k$ non-zero terms. 
}
\label{tab}
\centering
\begin{tabular}{|c|c|c|}
	\hline
	$a \in \mathbb{F}_{19}$ & order of $H(x+a)$ & Weight distribution \\
	\hline
	 1 & N/5 &  $5^9 6^{198}7^{678}$\\
	 2 & N/3 &  $5^{18} 6^{121}7^{452}$\\
	 3  & N/4 &  $4^3 5^{42} 6^{348}7^{1371}$\\
	4 & N/2 &  $4^3 5^{33} 6^{364}7^{1366}$\\
	 5 & N &  $4^3 5^{39} 6^{363}7^{1362}$\\
	 6 & N  &  $ 5^{57} 6^{345}7^{1365}$\\
	 7 & N &  $ 5^{54} 6^{370}7^{1343}$\\
	 8 & N/2  &  $ 4^3 5^{42} 6^{343}7^{1378}$\\
	 9  & N/4 &  $ 5^{27} 6^{385}7^{1353}$\\
	10  & N/8 &  $ 5^{27} 6^{384}7^{1353}$\\
  11 & N/2  &  $ 4^3 5^{42} 6^{343}7^{1378}$\\
	12 & N &  $ 5^{54} 6^{370}7^{1343}$\\
	13 & N  &  $ 5^{57} 6^{345}7^{1365}$\\
	14 & N &  $4^3 5^{39} 6^{363}7^{1362}$\\
	 15 & N/2 &  $4^3 5^{33} 6^{364}7^{1366}$\\
	  16 & N/4 &  $4^3 5^{42} 6^{349}7^{1371}$\\
	 17 & N/3 &  $5^{18} 6^{121}7^{452}$\\
	 18 & N/5 &  $5^9 6^{198}7^{678}$\\
	\hline
\end{tabular}
\end{minipage}\hfill
\end{table}


%
%



\end{document}